\newtheorem{thm}{Theorem}
\newtheorem{pro}[thm]{Proposition}
\newtheorem{lem}[thm]{Lemma}
\newtheorem{df}[thm]{Definition}
\newtheorem{prb}[thm]{Problem}
\newtheorem{cor}[thm]{Corollary}
\newtheorem{exe}[thm]{Example}
\newtheorem{rem}[thm]{Remark}
\newtheorem{nt}[thm]{Notation}
\newtheorem{frthm}{Théorème}
\newtheorem{frdf}[frthm]{Définition}
\newtheorem{frpro}[frthm]{Proposition}
\newtheorem{frlem}[frthm]{Lemme}
\DeclareMathOperator*{\connectsum}{\scalerel*{\#}{\sum}}
\newcommand{\intent}[1]{\llbracket#1\rrbracket}
\tikzstyle{mini diagram line} = [line width=0.75]
\tikzstyle{diagram line} = [line width=2]
\tikzstyle{diagram arrow} = [diagram line,->,>=stealth]
\tikzset{
    undercrossing/.style args={at #1}{
        decorate,
        decoration={
            moveto,
            pre=curveto, pre length=(#1*\pgfmetadecoratedpathlength-2.5*\pgflinewidth),
            post=curveto, post length=((1-#1)*\pgfmetadecoratedpathlength-2.5*\pgflinewidth)
        }
    },
    undercrossing/.default={at 0.5}
}
\newcommand{\todo}{
    \@ifstar
        \@todostar
        \@todonostar
}
\newcommand{\@todostar}[1]{\PackageWarning{TODO}{#1}}
\newcommand{\@todonostar}[1]{\@todostar{#1}\emph{\color{red}TODO: #1}}
\title{On the modular Jones polynomial}
\author{Guillaume Pagel}
\begin{document}

\maketitle

\begin{abstract}
A major problem in knot theory is to decide whether the Jones polynomial detects the unknot. In this paper we study a weaker related problem, namely whether the Jones polynomial reduced modulo an integer $n$ detects the unknot. The answer is known to be negative for $n=2^k$ with $k\geq 1$ and $n=3$. Here we show that if the answer is negative for some $n$, then it is negative for $n^k$ with any $k\geq 1$. In particular, for any $k\geq 1$, we construct nontrivial knots whose Jones polynomial is trivial modulo~$3^k$.

\textbf{Keywords:} Knot, Jones polynomial, Kauffman bracket, $n$-trivial knot, connected sum, Legendre formula, modular arithmetic.
\end{abstract}

\selectlanguage{french}
\begin{abstract}
Un problème majeur en théorie des noeuds est de décider si le polynôme de Jones détecte le noeud trivial. Dans cet article nous étudions une question similaire plus faible, c'est-à-dire si le polynôme de Jones réduit modulo un entier $n$ détecte le noeud trivial. On sait que la réponse est négative pour $n=2^k$ et $n=3$. On montre ici que si cette affirmation est fausse pour un entier $n$, alors elle l'est aussi pour $n^k$ avec $k\geq 1$. En particulier, on construit des noeuds non-triviaux avec un polynôme de Jones trivial modulo $3^k$.

\textbf{Mots-clés :} Noeud, Polynôme de Jones, crochet de Kauffman, noeud $n$-trivial, somme connexe, formule de Legendre, Arithmétique modulaire.
\end{abstract}
\selectlanguage{english}

\selectlanguage{french}
\section*{Version française abrégée}
\subsection*{Petite introduction}
L'un des problèmes majeurs de la théorie des noeuds est de développer des méthodes pour déterminer le plus simplement possible si un noeud donné est isotope au noeud trivial ou non. L'une de ces méthodes est l'utilisation d'un invariant, l'un des plus connus étant le polynôme de Jones. Une question encore ouverte à l'heure actuelle est de savoir si celui-ci \emph{détecte} le noeud trivial, c'est-à-dire que seul le noeud trivial ait un polynôme de Jones égal à $1$.

Dans cet article, on propose d'étudier un problème proche, à savoir si il existe des noeuds non-triviaux dont le polynôme de Jones est trivial modulo un entier $n$, que l'on appellera noeuds $n$-triviaux par la suite.
\begin{frdf}[Noeud $n$-trivial]
On dit qu'un noeud non-trivial $K$ est \emph{$n$-trivial} si son polynôme de Jones $V(K)$ vérifie $V(K)\equiv1[n]$.
\end{frdf}

Dans le papier de S.~Eliahou et J.~Fromentin \cite{eliahou2017remarkable}, on a une construction de noeuds premiers\footnote{On dit d'un noeud qu'il est \emph{premier} si il n'est pas trivial et si l'on ne peut pas l'écrire comme somme connexe de deux noeuds non-triviaux.} $n$-triviaux pour $n$ un entier s'écrivant comme une puissance de $2$, ainsi que l'existence de noeuds $3$-triviaux, mais pas d'informations quant aux autres entiers. Afin d'apporter quelques réponses à cette question, on propose de montrer le théorème suivant~:
\begin{frthm}\label{frthm:nktrivial}
Si il existe un noeud $n$-trivial pour un certain $n\geq 2$, alors quel que soit $k\geq 1$ il existe des noeuds non-premiers $n^k$-triviaux.
\end{frthm}
Dans la sous-section suivante, on donne les étapes clés de la démonstration constructive de ce théorème.

\subsection*{Résumé de la preuve}
Pour arriver à ce résultat, on aura besoin de quelques propriétés du polynôme de Jones ainsi que des coefficients binômiaux. On commence par rappeler la définition d'un noeud~:
\begin{frdf}[Noeud]
On appelle \emph{noeud} l'image du cercle $S^1$ par un plongement dans $\mathbb{R}^3$ à déformation près. Le \emph{noeud trivial} est donné par le plongement canonique.
\end{frdf}

On peut voir une représentation d'un noeud non-trivial dans la figure~\ref{fig:knot}, que nous appellerons $\gamma$ par la suite. Cette définition peut être généralisée au plongement de plusieurs cercles, ce qui donnera un entrelac.

Lorsque l'on effectue une projection du noeud sur un plan, on crée un \emph{diagramme} du noeud. A partir de celui-ci on peut calculer son polynôme de Jones via le \emph{crochet de Kauffman} suivant le \emph{modèle des états} introduit par L.H.~Kauffman dans \cite{kauffman1990state}. Pour résumer rapidement, il s'agit de couper chaque croisement du noeud selon deux possibilités, ce qui nous donne deux nouveaux diagrammes pondérés chacun par un coefficient, appelé \emph{état} du noeud. On peut résumer formellement ce modèle par trois règles (Voir~(\ref{eqn:KauffmanBracketRules}) en section~\ref{sec:jonespolynomial}).
\begin{frdf}[Polynôme de Jones]
Pour $K$ un noeud orienté, on définit le \emph{polynôme de Jones} $V(K)$ appartenant à $\Lambda=\mathbb{Z}[t^{-1}, t]$ en utilisant le crochet de Kauffman comme~:
\[
V(K) = \left((-\tau^3)^{-w(K)}\langle K \rangle\right)_{\tau=t^{-\frac{1}{4}}}
\]
où $\langle\cdot\rangle$ désigne le crochet de Kauffman et $w(K)$ l'entortillement de $K$, défini comme la différence entre le nombre de croisements positifs et négatifs (voir les figures~\ref{fig:positivecrossing} et~\ref{fig:negativecrossing}).
\end{frdf}

Il faut préciser qu'en général le polynôme de Jones vit dans $\mathbb{Z}\left[\sqrt{t},\sqrt{t^{-1}}\right]$, mais dans le cas des noeuds nous pouvons considérer $\Lambda$ à la place \cite[Theorem~2]{jones1985polynomial}. Cet invariant possède beaucoup de propriétés, ainsi le calcul du polynôme de Jones de la somme connexe de deux noeuds revient à une multiplication~\cite[Theorem 6]{jones1985polynomial}~:
\begin{frpro}\label{frpro:sommeconnexe}
Pour $K_1$ et $K_2$ deux noeuds ayant comme polynôme de Jones $V_1$ et $V_2$ respectivement, le polynôme de Jones de la somme connexe $K_1\# K_2$ est $V_1V_2$.
\end{frpro}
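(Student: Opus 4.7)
Le plan repose sur deux propriétés distinctes du crochet de Kauffman et de l'entortillement, qui se combinent via la définition du polynôme de Jones.

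D'abord, je montrerais que le crochet de Kauffman est multiplicatif sous somme connexe : $\langle K_1 \# K_2 \rangle = \langle K_1 \rangle \cdot \langle K_2 \rangle$. Pour cela, on choisit des diagrammes $D_1$ et $D_2$ de $K_1$ et $K_2$ et l'on forme $D_1 \# D_2$ en les reliant par un pont placé dans une zone sans croisement. Les croisements de $D_1 \# D_2$ sont alors la réunion disjointe de ceux de $D_1$ et $D_2$, si bien que les états de $D_1 \# D_2$ correspondent bijectivement aux couples d'états $(s_1, s_2)$, avec addition naturelle des exposants des facteurs $\tau^{\pm 1}$. Le point central est le décompte des boucles après lissage : $|s| = |s_1| + |s_2| - 1$, car les deux boucles qui aboutissent aux extrémités du pont fusionnent en une seule dans $D_1 \# D_2$. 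Ce décalage de $-1$ est précisément ce qu'il faut pour que, dans la somme d'états définissant le crochet, les facteurs $(-\tau^2 - \tau^{-2})^{|s|-1}$ se factorisent en produit, donnant la multiplicativité recherchée.

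Ensuite, l'additivité de l'entortillement $w(K_1 \# K_2) = w(K_1) + w(K_2)$ est immédiate, les croisements de la somme connexe étant la réunion disjointe des croisements des deux diagrammes avec signes préservés. En injectant ces deux égalités dans la formule $V(K) = \left((-\tau^3)^{-w(K)}\langle K \rangle\right)_{\tau=t^{-1/4}}$, on obtient directement
\[
V(K_1 \# K_2) = \left((-\tau^3)^{-w(K_1)-w(K_2)} \langle K_1\rangle \langle K_2 \rangle\right)_{\tau=t^{-1/4}} = V_1 V_2.
\]

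L'étape la plus subtile est le décompte correct des boucles, qui repose sur le choix d'un pont sans croisement reliant les deux diagrammes ; une fois cette observation faite, le reste est une manipulation formelle de la somme d'états. On notera aussi que la construction de la somme connexe dépend du choix d'un point de connexion, mais l'invariance du crochet (à la normalisation par l'entortillement près) sous les mouvements de Reidemeister garantit que le résultat ne dépend que des classes d'isotopie de $K_1$ et $K_2$.
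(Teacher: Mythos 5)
Your argument is correct, but note that the paper does not actually prove this proposition: it is quoted as a known result, with a citation to Theorem~6 of Jones's 1985 paper, where multiplicativity under connected sum comes out of the original trace/skein construction of the invariant. What you supply instead is a self-contained derivation in the Kauffman state-model formalism that the paper itself sets up, and it is sound: the states of $D_1 \# D_2$ are in bijection with pairs of states $(s_1,s_2)$, the $\tau$-exponents add, and the loop count satisfies $|s| = |s_1| + |s_2| - 1$ because the two loops meeting the connecting band fuse; since the bracket's state sum carries the factor $(-\tau^2-\tau^{-2})^{|s|-1}$, this $-1$ shift is exactly what makes the bracket multiplicative, and additivity of the writhe then gives $V(K_1\# K_2)=V_1V_2$ from the normalisation formula. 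This buys a proof entirely internal to the paper's chosen formalism, at the cost of having to handle two small points you gloss over: the well-definedness of $K_1\# K_2$ (independence of the cut points) is a topological fact about knots and is not delivered by Reidemeister invariance of the normalised bracket, which only guarantees independence of the chosen diagrams of a fixed knot; and the writhe additivity requires the orientations of $D_1$ and $D_2$ to be joined compatibly along the band so that crossing signs are preserved. Neither point is a real obstacle.
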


On utilisera la notation $\#$ pour la somme connexe, et on écrirera $\#(K,n)$ pour la somme connexe de $n$ fois le noeud $K$. Cette propriété est une des deux clés pour montrer le théorème~\ref{frthm:nktrivial}. On a à présent besoin de propriétés d'arithmétique modulaire.
\begin{frpro}\label{frpro:binome}
Pour $n,k\geq2$ et $i\in\intent{1,k-1}$, la puissance $n^{k-i}$ divise $\binom{n^{k-1}}{i}$.
\end{frpro}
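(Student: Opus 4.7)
The plan is to argue prime by prime. Writing the prime factorisation $n=\prod_{p}p^{v_p(n)}$, it is enough to prove that for every prime $p$ dividing $n$,
\[
v_p\!\left(\binom{n^{k-1}}{i}\right) \;\geq\; (k-i)\,v_p(n),
\]
where $v_p$ denotes the $p$-adic valuation; multiplying these inequalities over the primes $p\mid n$ then yields $n^{k-i}\mid\binom{n^{k-1}}{i}$.

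The main tool is the classical identity $i\binom{m}{i}=m\binom{m-1}{i-1}$, which I would apply with $m=n^{k-1}$. Since $\binom{n^{k-1}-1}{i-1}$ is a nonnegative integer, taking $p$-adic valuations gives
\[
v_p\!\left(\binom{n^{k-1}}{i}\right) \;\geq\; v_p(n^{k-1}) - v_p(i) \;=\; (k-1)\,v_p(n) - v_p(i).
\]
It then suffices to bound $v_p(i)$ from above. Because $p^{v_p(i)}\leq i$, we have $v_p(i)\leq\log_p(i)\leq i-1$; the inequality $p^{i-1}\geq i$ for $p\geq 2$ and $i\geq 1$ is a one-line induction. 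Since $v_p(n)\geq 1$, this yields $v_p(i)\leq i-1\leq (i-1)\,v_p(n)$, and combining with the previous display produces exactly the announced bound $(k-i)\,v_p(n)$.

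There is no real obstacle here, as every estimate is elementary. The mildly delicate point is to notice that the identity $i\binom{m}{i}=m\binom{m-1}{i-1}$ is precisely what extracts the full factor $n^{k-1}$ from the binomial coefficient, and that the resulting loss of at most $i-1$ factors of $p$ is absorbed by the assumption $v_p(n)\geq 1$ together with the range $i\leq k-1$. One could alternatively invoke Kummer's theorem or Legendre's formula (as hinted at by the keywords) to compute $v_p\!\left(\binom{n^{k-1}}{i}\right)$ directly via carries in base-$p$ addition, but the route above looks more direct.
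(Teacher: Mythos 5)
Your proof is correct. It shares the paper's overall strategy --- reduce to the $p$-adic bound $v_p\bigl(\tbinom{n^{k-1}}{i}\bigr)\geq (k-i)\,v_p(n)$ for each prime $p$ dividing $n$, and absorb the loss via $i-1\leq (i-1)v_p(n)$ using $v_p(n)\geq 1$ --- but the key lemma is different. The paper extracts the factor $n^{k-1}$ by noting that $n^{k-1}/\gcd\left(n^{k-1},i!\right)$ divides $\binom{n^{k-1}}{i}$, and bounds the loss by $v_p(i!)\leq i-1$, which it obtains from Legendre's formula $v_p(i!)=\frac{i-s_p(i)}{p-1}$ (proposition~\ref{pro:pvaluation}). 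You instead use the absorption identity $i\binom{m}{i}=m\binom{m-1}{i-1}$ with $m=n^{k-1}$, which costs only $v_p(i)\leq\log_p(i)\leq i-1$. Your route is more elementary (no Legendre's formula needed) and gives a sharper intermediate estimate, since $v_p(i)\leq v_p(i!)$; it also makes fully explicit the divisibility transfer from $n^{k-1}$ to the binomial coefficient, a step the paper's proof leaves implicit --- the fact that $m/\gcd(m,i!)$ divides $\binom{m}{i}$ is itself most easily derived from the very identity you invoke. What the paper's approach buys is mainly expository coherence with the toolkit it advertises (Legendre's formula is highlighted in the keywords and isolated as a preliminary proposition); mathematically the two arguments yield the same bound, and yours is, if anything, tighter and more self-contained.
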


La preuve se base d'une part sur la formule de Legendre \cite[theorem~1.2.3 p.~6]{boros2004irresistible} (voir \cite[XVI p.~8]{legendre1808essai} pour l'original) et sur l'étude des $p$-valuations des coefficients binomiaux. Cette propriété donne alors le lemme suivant~:
\begin{frlem}\label{frlem:puissancepolymodulaire}
Soient $P$ et $Q$ deux polynômes à coefficients entiers tels que l'on ait $P=1+nQ$ pour un certain $n\in\mathbb{N}$. Alors pour tout $k\geq 1$ on a $P^{n^{k-1}}\equiv 1 \left[n^k\right]$.
\end{frlem}

La preuve du théorème~\ref{frthm:nktrivial} devient alors une formalité, il suffit de combiner la propriété~\ref{frpro:sommeconnexe} et le lemme~\ref{frlem:puissancepolymodulaire} pour obtenir le résultat souhaité.

\subsection*{Conséquence et poursuites}

La conséquence principale de ce théorème est l'existence quel que soit $k$ de noeuds non-premiers $2^k$-triviaux et $3^k$-triviaux, ce qui vient compléter les premières découvertes de S.~Eliahou et J.~Fromentin dans \cite{eliahou2017remarkable}. On peut voir en exemple le noeud $\#(\gamma,3)$ qui est $9$-trivial (figure~\ref{fig:knot9trivial}).

Cependant on n'a aucune information sur d'autres modules. Découvrir un noeud $6$-trivial serait en particulier très intéressant, étant à la fois $2$-trivial et $3$-trivial il pourrait permettre de déterminer si la propriété $n$-trivial est multiplicative par rapport à $n$.

Cette même propriété est définie sur le polynôme de Jones, on peut imaginer une définition similaire sur le crochet de Kauffman, donnant des résultats certainement plus forts et en lien avec ceux de cet article. Cette approche a déjà été utilisée dans \cite{eliahou2017remarkable}, mais seulement pour des tangles algébriques.

Il reste également à déterminer le nombre minimal de croisements nécessaires pour qu'un noeud puisse être $n$-trivial, et de constater si ces noeuds $n$-triviaux \og{}minimaux\fg{} sont premiers.


\selectlanguage{english}

\section{Introduction}

One of the major aims of knot theory is to determine as simply as possible whether a given knot is isotopic to the unknot. The Jones polynomial is a knot invariant living in the ring of Laurent polynomials over the integers. A long-standing question is to determine whether the Jones polynomial can \emph{detect} the unknot, meaning that the unknot is the only knot with Jones polynomial equal to $1$. In case of links we know that this invariant does not detect the unlink with at least two components: this was proved first by M.~Thistlethwaite \cite{thistlethwaite2001links} for links with $2$ and $3$ components, then generalised by S.~Eliahou, L.H.~Kauffman and M.~Thistlethwaite \cite{eliahou2003infinite}, but leave unanswered the case of knots. However we know how to construct mutant knots that aren't distinguished by the Jones polynomial \cite{kohno_jones_1990}.

The idea here is to study the Jones polynomial in a modular way, in order to better understand structures formed by knots. Also thanks to the modulo operation, some of the coefficients of the polynomial will disappear, and sometimes the Jones polynomial modulo an integer $n$ will become trivial. Nontrivial knots with this property will be called $n$-trivial.
\begin{df}[$n$-trivial knot]
We say that a nontrivial knot $K$ is \emph{$n$-trivial} if its Jones polynomial $V(K)$ satisfy $V(K)\equiv1[n]$.
\end{df}

A modular version of the Jones polynomial problem is then:
\begin{prb}\label{prb:existntrivial}
Given any integer $n\geq2$, do there exist $n$-trivial knots?
\end{prb}

For all $k\geq 1$, the existence of $2^k$-trivial knots has been established by S.~Eliahou and J.~Fromentin in \cite{eliahou2017remarkable}. They also mention the existence of $3$-trivial knots. Essentially nothing else is known, except that there are no $5$-trivial knots up to 16 crossings.

In this paper we provide some answers to this problem, in fact we will claim the following theorem:
\begin{thm}\label{thm:nktrivial}
If there exists an $n$-trivial knot for some $n\geq 2$, then for all $k\geq 1$ there exists an $n^k$-trivial knot.
\end{thm}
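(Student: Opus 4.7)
The plan is to exploit the multiplicativity of the Jones polynomial under connected sum (Proposition \ref{frpro:sommeconnexe}) and reduce the statement to a purely arithmetic question about raising $1+nQ$ to a power modulo $n^k$.

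Assume $K$ is an $n$-trivial knot, so that $V(K) = 1 + nQ$ for some $Q \in \Lambda = \mathbb{Z}[t^{-1},t]$. I would consider the iterated connected sum $K_k := \#(K, n^{k-1})$. By Proposition \ref{frpro:sommeconnexe}, its Jones polynomial is
\[
V(K_k) = V(K)^{n^{k-1}} = (1+nQ)^{n^{k-1}}.
\]
Since connected sum is additive on knot genus, $K_k$ is nontrivial whenever $K$ is, and for $k \geq 2$ it is manifestly non-prime. It therefore suffices to show $(1+nQ)^{n^{k-1}} \equiv 1 \pmod{n^k}$, which is exactly the content of Lemma \ref{frlem:puissancepolymodulaire}.

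To establish this lemma, I would expand by the binomial theorem:
\[
(1+nQ)^{n^{k-1}} = 1 + \sum_{i=1}^{n^{k-1}} \binom{n^{k-1}}{i} n^i Q^i.
\]
For $i \geq k$ the factor $n^i$ already contributes $n^k$, so these terms vanish modulo $n^k$. For $1 \leq i \leq k-1$, I need $n^{k-i}$ to divide $\binom{n^{k-1}}{i}$, which is precisely Proposition \ref{frpro:binome}. Assuming that proposition, every term in the sum is divisible by $n^k$, and the congruence follows.

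The main obstacle, therefore, is Proposition \ref{frpro:binome}: the divisibility $n^{k-i} \mid \binom{n^{k-1}}{i}$ for $1 \leq i \leq k-1$. I would prove it prime by prime. Writing $n = \prod_p p^{\alpha_p}$, it is enough to show $v_p\bigl(\binom{n^{k-1}}{i}\bigr) \geq (k-i)\alpha_p$ for every prime $p \mid n$. Using Legendre's formula in the form $v_p(m!) = (m-s_p(m))/(p-1)$, where $s_p(m)$ denotes the sum of base-$p$ digits, one gets
\[
v_p\!\binom{n^{k-1}}{i} = \frac{s_p(i) + s_p(n^{k-1}-i) - s_p(n^{k-1})}{p-1}.
\]
Since $v_p(n^{k-1}) = (k-1)\alpha_p$, the base-$p$ expansion of $n^{k-1}$ ends with at least $(k-1)\alpha_p$ zeros, so subtracting any $i < n^{k-1}$ forces many borrows; a careful digit-counting argument, combined with the elementary bound $v_p(i!) \leq (i-1)/(p-1) \leq i-1$, should yield the required lower bound $(k-i)\alpha_p$. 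This digit/valuation bookkeeping is where the real work lies; everything else is then a direct combination of the lemma with the connected-sum formula to produce the desired $n^k$-trivial knot $\#(K,n^{k-1})$.
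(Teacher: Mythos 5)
Your proposal is correct and follows essentially the same route as the paper: take the connected sum of $n^{k-1}$ copies of the $n$-trivial knot, use multiplicativity of the Jones polynomial under connected sum, expand $(1+nQ)^{n^{k-1}}$ binomially, and reduce everything to the divisibility $n^{k-i}\mid\binom{n^{k-1}}{i}$ proved prime by prime via Legendre's formula. The only loose end, your ``careful digit-counting argument'' with borrows, is unnecessary: writing $\binom{m}{i}=\frac{m(m-1)\cdots(m-i+1)}{i!}$ gives $v_p\binom{n^{k-1}}{i}\geq v_p(n)(k-1)-v_p(i!)$ directly, and the bound $v_p(i!)\leq i-1\leq v_p(n)(i-1)$ that you already quote then yields $v_p\binom{n^{k-1}}{i}\geq v_p(n)(k-i)$, which is exactly how the paper closes the argument.
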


This result allows us to give a positive answer to problem~\ref{prb:existntrivial} for integers of the form $3^k$, and gives a new proof for $n=2^k$. Furthermore, the proof of theorem~\ref{thm:nktrivial} is constructive and gives an explicit way to obtain these knots. Let us remark here that the knots constructed in \cite{eliahou2017remarkable} are prime, which is not the case in this paper. To achieve that, we will need some results in modular arithmetic.
Here is the structure of this paper: in section~\ref{sec:jonespolynomial}, we recall some properties of the Jones polynomial, then in section~\ref{sec:modularjonespolynomial} we give a proof of theorem~\ref{thm:nktrivial}. In section~\ref{sec:arithmeticproperties}, we prove some arithmetic results we used in the previous section. Finally in section~\ref{sec:furtherwork} we finish with some open questions.
\section{Knots and the Jones polynomial}\label{sec:jonespolynomial}

We start this section with a formal definition of a \emph{knot}:
\begin{df}[Knot]
A \emph{knot} is the image of an embedding of the circle $S^1$ into $\mathbb{R}^3$ up to deformation. The \emph{unknot} is given by the canonical embedding. 
\end{df}

\begin{figure}[h]
    \centering
    \includegraphics{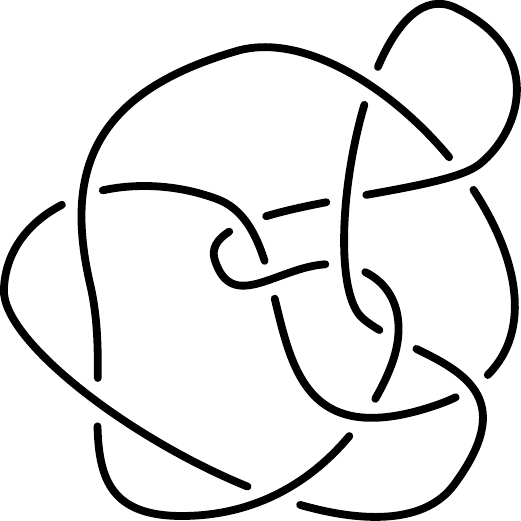}
    \caption{A representation of $\gamma$, a $3$-trivial prime knot with 12 crossings \cite[knot 12n659]{cha2011knotinfo}.\\ \selectlanguage{french}\emph{Une représentation de $\gamma$, un noeud $3$-trivial premier avec 12 croisements \cite[knot 12n659]{cha2011knotinfo}.}\selectlanguage{english}}
    \label{fig:knot}
\end{figure}
An example of a nontrivial knot can be seen in figure~\ref{fig:knot}. We can consider the embedding of a disjoint union of several circles, in that case we will obtain another object called \emph{link}. The number of circles embedded gives the number of \emph{components} of the link, knots are particular links with one component.

The Jones polynomial discovered by V.F.R. Jones \cite{jones1985polynomial} can be contructed via the \emph{states model} introduced by L.H. Kauffman \cite{kauffman1990state}. After we project the knot on a plane, the main idea is to split each crossing recursively. This operation create new \emph{states} weighted with a coefficient on each step. More precisely, the \emph{Kauffman bracket} follows the rules:
\begin{equation}
\left\{
\begin{array}{rl}\label{eqn:KauffmanBracketRules}\tag{$\ast$}
    \left\langle\begin{tikzpicture}[baseline={(0,0.1)}, scale=0.4]
    \draw[mini diagram line] (0.5,0.5) circle[radius=0.5];
\end{tikzpicture}\right\rangle &=  1,\\
    \left\langle K\sqcup\right\rangle &= -(\tau^{-2}+\tau^2)\langle K\rangle,\\
    \left\langle\begin{tikzpicture}[baseline={(0,0.1)}, scale=0.4]
    \draw[mini diagram line] (0,0) -- (1,1);
    \draw[mini diagram line, undercrossing] (1,0) -- (0,1);
\end{tikzpicture}\right\rangle &= \tau\left\langle\begin{tikzpicture}[baseline={(0,0.1)}, scale=0.4]
    \draw[mini diagram line] (0,0) .. controls (0.5,0.5) .. (0,1);
    \draw[mini diagram line] (1,0) .. controls (0.5,0.5) .. (1,1);
\end{tikzpicture}\right\rangle + \tau^{-1}\left\langle\begin{tikzpicture}[baseline={(0,0.1)}, scale=0.4]
    \draw[mini diagram line] (0,0) .. controls (0.5,0.5) .. (1,0);
    \draw[mini diagram line] (0,1) .. controls (0.5,0.5) .. (1,1);
\end{tikzpicture}\right\rangle.
\end{array}\right.
\end{equation}
The second rule means that if we have a diagram isotopic to a circle next to an other diagram $K$ without any crossing between them, then we can replace this circle by a coefficient. The third rule explains how to split each crossing locally, and the first one treats the case of the unknot. Once the bracket is computed, the only thing left to do is to normalize and change the variable.
\begin{df}[Jones polynomial]
For an oriented knot $K$, we can construct the \emph{Jones polynomial} $V(K)$ living in $\Lambda=\mathbb{Z}[t,t^{-1}]$ using the Kauffman bracket as:
\[
V(K) = \left((-\tau^3)^{-w(K)}\langle K \rangle\right)_{\tau=t^{-\frac{1}{4}}}
\]
where $\langle\cdot\rangle$ denote the Kauffman bracket and $w(K)$ is the \emph{writhe} of $K$, defined as the difference between the number of positive and negative crossings (see figure~\ref{fig:crossingsign}).
\end{df}

We have to specify that the Jones polynomial naturally lives in $\mathbb{Z}\left[\sqrt{t}, \sqrt{t^{-1}}\right]$, but in the case of knots we can consider the domain $\Lambda$ instead \cite[Theorem~2]{jones1985polynomial}.
\begin{figure}[h]
    \centering
    \begin{subfigure}{0.45\textwidth}
        \centering
        \begin{tikzpicture}
    \draw[diagram arrow] (0,0) -- (1,1);
    \draw[diagram line, undercrossing={at 0.66666}] (1,0) -- (0.25,0.75);
    \draw[diagram arrow] (0.25,0.75) -- (0,1);
\end{tikzpicture}
        \caption{Positive crossing\\ \selectlanguage{french}\emph{Croisement positif}\selectlanguage{english}}
        \label{fig:positivecrossing}
    \end{subfigure}
    \begin{subfigure}{0.45\textwidth}
        \centering
        \begin{tikzpicture}
    \draw[diagram arrow] (1,0) -- (0,1);
    \draw[diagram line, undercrossing={at 0.66666}] (0,0) -- (0.75,0.75);
    \draw[diagram arrow] (0.75,0.75) -- (1,1);
\end{tikzpicture}
        \caption{Negative crossing\\ \selectlanguage{french}\emph{Croisement négatif}\selectlanguage{english}}
        \label{fig:negativecrossing}
    \end{subfigure}
    \caption{Each crossing of an oriented knot can be identified to one of figure~\ref{fig:positivecrossing} or~\ref{fig:negativecrossing}.\\ \selectlanguage{french}\emph{Tout croisement d'un noeud orienté peut être identifié à l'une des figures~\ref{fig:positivecrossing} ou~\ref{fig:negativecrossing}.}\selectlanguage{english}}
    \label{fig:crossingsign}
\end{figure}
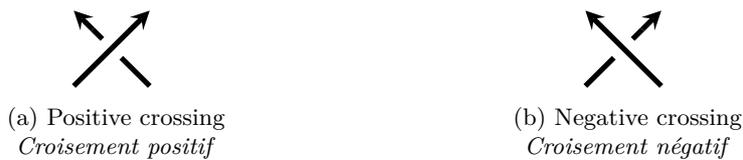

The computation by this method is simple but involves $2^l$ terms, with $l$ the number of crossings.
\begin{exe}\label{exe:3trivial}
The Jones polynomial of the knot $\gamma$ depictured in figure~\ref{fig:knot} is:
\[
V(\gamma) = 1-3t+6t^2-9t^3+12t^4-12t^5+12t^6-9t^7+6t^8-3t^9.
\]
We observe that this knot is $3$-trivial.
\end{exe}

The Jones polynomial has very interesting properties, we recall here a well known one~\cite[Theorem 6]{jones1985polynomial}:
\begin{pro}\label{pro:jonesconnectedsum}
For two knots $K_1$ and $K_2$, the Jones polynomial of the connected sum of $K_1$ and $K_2$ is $V_1V_2$ where $V_1$ and $V_2$ are the Jones polynomial of $K_1$ and $K_2$ respectively.
\end{pro}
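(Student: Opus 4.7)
The plan is to reduce the statement to two separate multiplicativity/additivity claims, one for the Kauffman bracket and one for the writhe, and then combine them through the normalization formula defining $V$. First I would fix a convenient diagrammatic model of the connected sum: represent $K_1$ and $K_2$ by diagrams $D_1, D_2$ drawn in disjoint regions of the plane, then perform the connected sum by breaking one arc of each diagram and reconnecting with two parallel strands that do not cross any other arc. With this model no new crossings are introduced, so the set of crossings of $D_1 \# D_2$ is exactly the disjoint union of the crossings of $D_1$ and $D_2$, each keeping its sign.

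The first key step is the additivity of the writhe, $w(K_1 \# K_2) = w(K_1) + w(K_2)$, which follows immediately from this crossing-preserving model together with the fact that one must check $D_1 \# D_2$ can be oriented compatibly with $D_1$ and $D_2$ (pick orientations so that the two joined strands agree).

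The second and main step is multiplicativity of the Kauffman bracket, $\langle D_1 \# D_2 \rangle = \langle D_1 \rangle \langle D_2 \rangle$. I would argue this via the state sum expansion from rules~(\ref{eqn:KauffmanBracketRules}). Each state $\sigma$ of $D_1 \# D_2$ decomposes uniquely as a pair $(\sigma_1, \sigma_2)$ of states of $D_1$ and $D_2$ with the same $A$/$B$-smoothing counts. The only delicate point is counting loops: the connected sum joins exactly one loop of the state of $D_1$ to exactly one loop of the state of $D_2$, so the total number of loops is $|S_{\sigma_1}| + |S_{\sigma_2}| - 1$, rather than $|S_{\sigma_1}| + |S_{\sigma_2}|$ as it would be for the disjoint union. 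Since $\langle D_i \rangle = \sum_{\sigma_i} \tau^{a(\sigma_i) - b(\sigma_i)}(-\tau^{-2}-\tau^2)^{|S_{\sigma_i}|-1}$, the $-1$ coming from the loop fusion exactly cancels the missing $(-\tau^{-2}-\tau^2)$ factor, and the product $\langle D_1 \rangle \langle D_2 \rangle$ lines up termwise with $\langle D_1 \# D_2 \rangle$.

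To conclude, combine the two steps:
\[
V(K_1 \# K_2) = \bigl((-\tau^{3})^{-w(K_1)-w(K_2)} \langle D_1 \rangle \langle D_2 \rangle\bigr)_{\tau = t^{-1/4}} = V(K_1) V(K_2).
\]
I expect the main obstacle to be the loop-counting argument in the state sum: one has to verify that the fusion really affects exactly one loop in each state, which relies on the fact that the two reconnecting strands in the connected sum model do not cross anything and hence, in every smoothing, sit inside one loop from the state of $D_1$ and one loop from the state of $D_2$. Everything else — the bijection between states, the $\tau$-exponent bookkeeping, and the writhe additivity — is routine given this observation.
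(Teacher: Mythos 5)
Your argument is correct, but note that the paper does not actually prove this proposition: it is quoted from the literature, with a citation to Theorem~6 of Jones's original article, where multiplicativity under connected sum is obtained from the Markov trace on the braid-group (Temperley--Lieb) representation. Your route is the standard diagrammatic alternative, and it meshes better with the paper's setup, since $V$ is introduced here via the Kauffman bracket. The two key points you isolate are exactly the right ones: writhe additivity is immediate from the crossing-preserving model of $D_1\# D_2$ (and is orientation-independent for knots, since reversing the orientation of a knot reverses both strands at every crossing and so preserves all signs), and the loop count $|S_{\sigma_1}|+|S_{\sigma_2}|-1$ is the crux of the bracket computation. With the normalization $\langle\bigcirc\rangle=1$ used in rules~(\ref{eqn:KauffmanBracketRules}), each factor contributes $(-\tau^{-2}-\tau^{2})^{|S_{\sigma_i}|-1}$, and $(|S_{\sigma_1}|-1)+(|S_{\sigma_2}|-1)=|S_{\sigma_1}|+|S_{\sigma_2}|-2$ matches the exponent $\bigl(|S_{\sigma_1}|+|S_{\sigma_2}|-1\bigr)-1$ of the fused state, so the product identity holds termwise as you claim. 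The loop-fusion observation itself is sound: the cut arc of $D_i$ survives every smoothing and hence lies on exactly one loop of each state, and the two parallel connecting strands merge those two loops into one. If you want to avoid the state-sum bookkeeping entirely, an equivalent and slightly cleaner argument is induction on the number of crossings of $D_2$ using the third bracket rule, reducing to the case where $D_2$ is a crossingless diagram; but as written your proof is complete.
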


The connected sum is the same as the topological one, i.e. we cut each knot in one point and glue the endpoints created on one knot to the other one without crossing. The end result is independent of the cutting points chosen. In the sequel, this operation will be denote by $\#$, and the connected sum of $n$ times the knot $K$ will be denoted as $\#(K,n)$.

As a consequence of proposition~\ref{pro:jonesconnectedsum}, the existence of $p$-trivial knots with $p$ prime leads to the existence of $p$-trivial \emph{prime} knots. We define this property below:
\begin{df}[Prime knot]\label{df:primeknot}
A knot is \emph{prime} if it is not trivial and if it cannot be written as a connected sum of two non-trivial knots.
\end{df}

For example, the knot $\gamma$ represented in figure~\ref{fig:knot} is prime. Now, we can properly state:
\begin{pro}
For $p$ a prime number, if there exist a $p$-trivial knot, then there exist a $p$-trivial prime knot.
\end{pro}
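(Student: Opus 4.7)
The plan is to exploit the multiplicativity of the Jones polynomial under connected sum in order to extract a prime $p$-trivial factor from any given $p$-trivial knot $K$. By Schubert's unique prime decomposition theorem for knots, $K$ decomposes as $K = K_1 \# \cdots \# K_m$ with each $K_i$ a nontrivial prime knot. Applying Proposition~\ref{pro:jonesconnectedsum} iteratively, one gets $V(K) = V(K_1) V(K_2) \cdots V(K_m)$, and reducing modulo $p$ yields $\prod_{i=1}^m V(K_i) \equiv 1 \pmod{p}$ in the Laurent polynomial ring $\mathbb{F}_p[t,t^{-1}]$.

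Next, I would use the fact that $\mathbb{F}_p[t,t^{-1}]$ is an integral domain whenever $p$ is prime, so each $V(K_i) \pmod{p}$ must be a unit of this ring. The units of $\mathbb{F}_p[t,t^{-1}]$ are precisely the Laurent monomials $c\cdot t^n$ with $c \in \mathbb{F}_p^{\times}$ and $n \in \mathbb{Z}$. Evaluating at $t=1$ and invoking the standard fact that $V(K_i)(1) = 1$ for every knot, one obtains $c \equiv 1 \pmod{p}$, so that $V(K_i) \equiv t^{n_i} \pmod{p}$ for some integer $n_i$, with $\sum_{i=1}^m n_i = 0$.

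The hard part will be showing that $n_i = 0$ for at least one index $i$, since this immediately yields a prime $p$-trivial knot. To close this, I would invoke further vanishing properties of $V$ at special values: the identity $V'(K)(1) = 0$ for knots gives $p \mid n_i$ after differentiating the relation $V(K_i) = t^{n_i} + p\,Q_i(t)$ and evaluating at $t=1$, and the evaluation $V(K)(\omega) = 1$ at a primitive cube root of unity (equivalently, the divisibility $(t^3 - 1) \mid (V(K) - 1)$ in $\mathbb{Z}[t,t^{-1}]$) gives $3 \mid n_i$ after the same substitution in $\mathbb{Z}[\omega]$. Combined with the constraint $\sum n_i = 0$, these divisibilities should force some $n_i$ to vanish; if not, one may need to add further evaluations (e.g.\ at $t=\sqrt{-1}$, relating to the Arf invariant) or degree bounds from the Kauffman bracket expansion to finalize the argument.
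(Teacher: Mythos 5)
Your overall strategy is the same as the paper's: decompose the given $p$-trivial knot into prime summands (the paper splits off a single prime factor $K=K_1\#K_2$ rather than invoking the full Schubert decomposition, but that is immaterial), apply the multiplicativity of $V$ under connected sum, use that the Laurent polynomial ring over $\mathbb{Z}/p\mathbb{Z}$ is an integral domain to conclude that each factor reduces to a unit, and pin down that unit via $V(K_i)(1)=1$. Everything up to the conclusion $V(K_i)\equiv t^{n_i}\pmod p$ with $\sum_i n_i=0$ is correct.

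The gap is the one you flag yourself: you never show that some $n_i$ vanishes, and the tools you propose cannot do it. The identity $V'(K_i)(1)=0$ does give $p\mid n_i$, and the evaluation at a primitive cube root of unity does give $3\mid n_i$, but the resulting constraints --- $3p\mid n_i$ for every $i$ together with $\sum_i n_i=0$ --- are satisfied, for instance, by $(n_1,n_2)=(3p,-3p)$, so no individual exponent is forced to be zero. Adding further special values (such as $t=\sqrt{-1}$) only produces more congruence conditions on the $n_i$, and a family of congruences plus a zero-sum condition can never force a particular $n_i$ to vanish; so the argument as written does not terminate. For comparison, the paper's proof passes over exactly this point: after invoking integrality it asserts that $V(K_1)$ and $V(K_2)$ are \emph{constant} in $\left(\mathbb{Z}/p\mathbb{Z}\right)\left[\sqrt{t},\sqrt{t}^{-1}\right]$, whereas the units of that ring are all monomials $c\,\sqrt{t}^{\,k}$; excluding a nonconstant monomial is precisely the step you correctly identified as the hard part. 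To finish along these lines you would need a genuinely new input, e.g.\ an argument that the reduction of $V(K)$ modulo $p$ can never be a nonconstant monomial for a knot $K$.
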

\begin{proof}
Suppose that there exists a $p$-trivial non-prime knot $K$. We may assume $K=K_1\#K_2$ with $K_1$, $K_2$ both non-trivial knots and $K_1$ prime. As $K$ is $p$-trivial, its Jones polynomial in the ring $\mathcal{R}=\mathbb{Z}/p\mathbb{Z}\left[\sqrt{t}, \sqrt{t}^{-1}\right]$ is $V\left(K\right)=_{\mathcal{R}}1$. By proposition~\ref{pro:jonesconnectedsum}, we have $V\left(K_1\right)V\left(K_2\right)=_{\mathcal{R}}1$. As $p$ is prime, the ring $\mathcal{R}$ is an integral domain, so we deduce that $V\left(K_1\right)$ and $V\left(K_2\right)$ are constant over $\mathcal{R}$. By~\cite[Theorem~15]{jones1985polynomial}, we know that the Jones polynomial of any knot evaluated at $t=1$ is $1$. It follows here that $V\left(K_1\right)=_{\mathcal{R}}V\left(K_2\right)=_{\mathcal{R}}1$. Hence $K_1$ is a $p$-trivial prime knot, as desired.
\end{proof}
\section{Proof and consequences of theorem~\ref{thm:nktrivial}}\label{sec:modularjonespolynomial}

In this section we prove theorem~\ref{thm:nktrivial} and study its consequences. We use a lemma we will prove in the next section.
\begin{lem}\label{lem:powerpolynomialmodular}
Let $P$ and $Q$ be two polynomials over the integers such that $P = 1 +nQ$ for some $n\geq2$. Then we have:
\begin{align*}
    P &\equiv 1 [n]\\
    P^{n^{k-1}} &\equiv 1 \left[n^k\right]
\end{align*}
\end{lem}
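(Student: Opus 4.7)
The first congruence $P \equiv 1\pmod n$ is immediate from $P-1 = nQ$, so the work is entirely in the second.

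My plan is a direct one-shot argument via the binomial theorem. Expand
\[
P^{n^{k-1}} \;=\; (1+nQ)^{n^{k-1}} \;=\; 1 + \sum_{i=1}^{n^{k-1}} \binom{n^{k-1}}{i}\, n^{i}\, Q^{i},
\]
and show that every summand with $i\geq 1$ is divisible by $n^{k}$, from which the claim follows at once. This reduces the lemma to a purely numerical divisibility statement about the coefficients $\binom{n^{k-1}}{i} n^{i}$.

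To check the divisibility I would split the range of $i$ into two regimes. If $i\geq k$, the factor $n^{i}$ alone is divisible by $n^{k}$ and there is nothing more to do. If $1\leq i\leq k-1$, the factor $n^{i}$ is not enough and one must extract the missing $n^{k-i}$ from the binomial coefficient; this is exactly the content of the companion arithmetic statement (frpro:binome in the French résumé, to be proved in section~\ref{sec:arithmeticproperties}), which says that $n^{k-i}$ divides $\binom{n^{k-1}}{i}$ for $1\leq i\leq k-1$. Multiplying, the $i$-th term is divisible by $n^{k-i}\cdot n^{i}=n^{k}$, finishing the proof.

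The real obstacle here is the arithmetic proposition on $p$-adic valuations of $\binom{n^{k-1}}{i}$, which one settles by Legendre's formula and the base-$p$ digit count; I would do this separately so as not to interrupt the polynomial argument. As a sanity check, there is also a short induction on $k$: if $P^{n^{k-1}} = 1 + n^{k}R$, then
\[
P^{n^{k}} \;=\; (1+n^{k}R)^{n} \;=\; 1 + \sum_{i=1}^{n}\binom{n}{i} n^{ki} R^{i},
\]
and every nonconstant term carries $n^{ki}$ with $ki\geq k+1$ for $i,k\geq 1$, so $P^{n^{k}}\equiv 1\pmod{n^{k+1}}$. This bypasses frpro:binome entirely, but the direct expansion is more in line with the rest of the paper and makes the extraction of $n^{k}$ transparent.
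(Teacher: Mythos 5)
Your main argument is correct and is essentially the paper's own proof: expand $(1+nQ)^{n^{k-1}}$ by the binomial theorem, absorb the terms with $i\geq k$ into a multiple of $n^k$, and extract the missing factor $n^{k-i}$ from $\binom{n^{k-1}}{i}$ for $1\leq i\leq k-1$ via the companion arithmetic proposition. One small slip in your optional induction aside: for $i=1$ the exponent is $ki=k$, not $k+1$, and the extra factor of $n$ needed there comes from the binomial coefficient $\binom{n}{1}=n$ rather than from $n^{ki}$ --- the conclusion still holds, but as stated that step is inaccurate.
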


We can now prove theorem~\ref{thm:nktrivial} by construction:
\begin{proof}[Proof of theorem~\ref{thm:nktrivial}]
Let $K$ be a $n$-trivial knot. We denote by $V(K) = 1+nP$ the Jones polynomial of $K$. Hence the connected sum of $n^{k-1}$ times the knot $K$ will be:
\[
V\left(\#\left(K,n^{k-1}\right)\right) = (V(K))^{n^{k-1}} = (1+nP)^{n^{k-1}}
\]
according to proposition~\ref{pro:jonesconnectedsum}. By lemma~\ref{lem:powerpolynomialmodular}, the knot $\#\left(K,n^{k-1}\right)$ is $n^k$-trivial.
\end{proof}

As an immediate consequence we have the following result:
\begin{cor}
For all integers $k\geq 2$ there exist $3^k$-trivial and $2^k$-trivial non-prime knots.
\end{cor}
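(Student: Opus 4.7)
The plan is to combine Theorem~\ref{thm:nktrivial} with the two base cases $n=2$ and $n=3$, both of which are already available from the introduction: the existence of $2$-trivial knots was established by S.~Eliahou and J.~Fromentin in \cite{eliahou2017remarkable}, and a $3$-trivial knot is exhibited explicitly as $\gamma$ in Example~\ref{exe:3trivial} (Figure~\ref{fig:knot}). Feeding each of these two knots into Theorem~\ref{thm:nktrivial} should immediately produce, for every $k\geq 1$, a $2^k$-trivial knot and a $3^k$-trivial knot respectively.

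The only extra point to justify, beyond invoking the theorem, is \emph{non-primality} when $k\geq 2$. For this I would simply reread the construction used in the proof of Theorem~\ref{thm:nktrivial}: starting from an $n$-trivial knot $K$, the $n^k$-trivial knot produced is $\#(K,n^{k-1})$. When $k\geq 2$ one has $n^{k-1}\geq n\geq 2$, so $\#(K,n^{k-1})$ is a connected sum of at least two nontrivial knots, hence non-prime by Definition~\ref{df:primeknot}. Specialising to $n=2$ and $n=3$ then yields the two asserted families.

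I do not anticipate any real obstacle: the corollary is essentially a direct specialisation of Theorem~\ref{thm:nktrivial}, with only the non-primality count to verify, and this verification is what forces the hypothesis $k\geq 2$ rather than $k\geq 1$ (since $\#(K,n^{0})=K$ may be prime, as in fact happens for $K=\gamma$).
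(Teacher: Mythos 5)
Your proposal is correct and matches the paper's (implicit) argument: the paper states this corollary as an immediate consequence of Theorem~\ref{thm:nktrivial} applied to the known $2$-trivial and $3$-trivial knots, with the constructed knot $\#(K,n^{k-1})$ being manifestly a nontrivial connected sum for $k\geq 2$. Your extra remark about why $k\geq 2$ is needed for non-primality is a sensible and accurate filling-in of the detail the paper leaves unstated.
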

\begin{exe}
As we saw in example~\ref{exe:3trivial}, the knot $\gamma$ in figure~\ref{fig:knot} is $3$-trivial. We can construct a $9$-trivial knot in the form of $\#(\gamma,3)$ represented in figure~\ref{fig:knot9trivial}. Its Jones polynomial is:
\[
-27t^{27}+162t^{26}-567t^{25}+\cdots-41310t^{15}+40257t^{14}+\cdots+45t^2-9t+1.
\]
\end{exe}
\begin{figure}[h]
    \centering
    \includegraphics[width=0.95\textwidth]{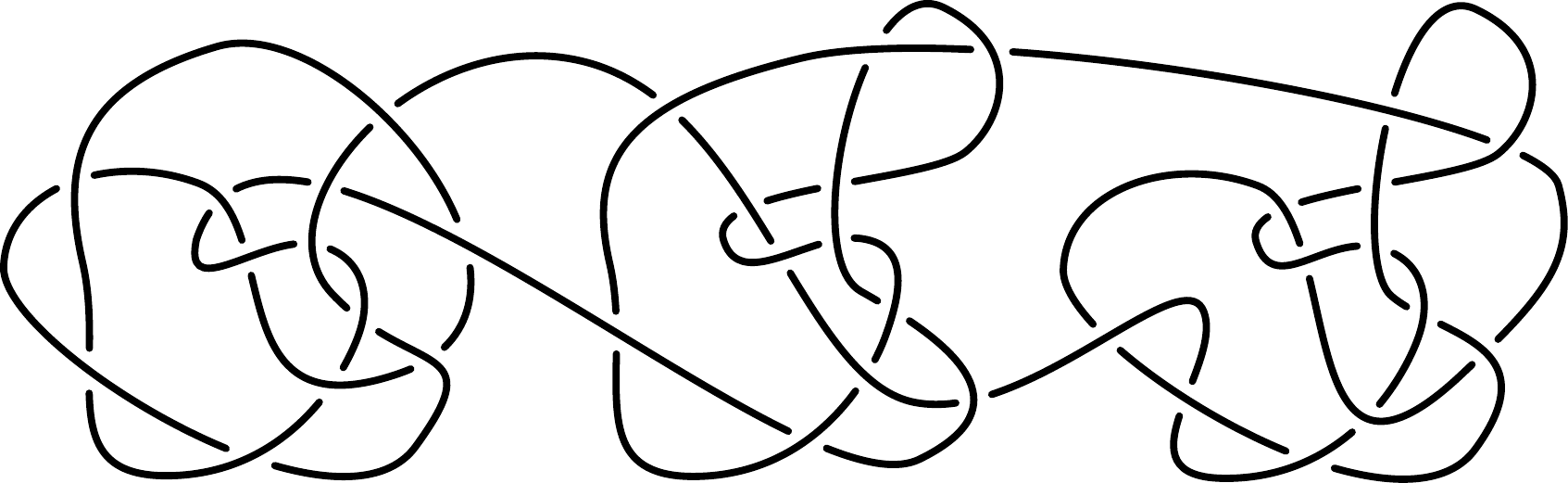}
    \caption{This figure represents the knot $\#(\gamma,3)$. As $\gamma$ (figure~\ref{fig:knot}) is $3$-trivial, this one is $9$-trivial.\\ \selectlanguage{french}\emph{Cette figure représente le noeud $\#(\gamma,3)$. Comme $\gamma$ (figure~\ref{fig:knot}) est $3$-trivial, celui-ci est $9$-trivial.}\selectlanguage{english}}
    \label{fig:knot9trivial}
\end{figure}

At the time of writing, $n$-trivial knots are only known for $2^k$ or $3^k$ and $k\geq 1$. It remains an open problem to extend this result to other moduli $n$. The naive approach described in the next proposition shows that we cannot obtain a composite module directly by connected sum.
\begin{pro}
Let $K_1$ and $K_2$ be $m_1$-trivial and $m_2$-trivial knots respectively where $m_1\neq m_2$. Then for all $n_1,n_2\geq1$, the connected sum of $\#\left(K_1,n_1\right)$ and $\#\left(K_2,n_2\right)$ is not $m_1m_2$-trivial.
\end{pro}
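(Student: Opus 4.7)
The plan is to use Proposition~\ref{pro:jonesconnectedsum} to write $V := V(K_1)^{n_1}V(K_2)^{n_2}$ and argue by contradiction that $V\equiv 1\pmod{m_1m_2}$ is impossible. Write $V(K_i)=1+m_iP_i$ with $P_i\in\mathbb{Z}[t,t^{-1}]$ nonzero (because $K_i$ is nontrivial) and $P_i(1)=0$ (because $V(K_i)(1)=1$). By the binomial theorem,
\[
V-1 \;=\; \sum_{(i,j)\neq(0,0)} \binom{n_1}{i}\binom{n_2}{j}\,m_1^i m_2^j\,P_1^iP_2^j.
\]

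The first step is to reduce the putative congruence modulo $m_1$ and modulo $m_2$ separately. Modulo $m_1$, every summand with $i\geq 1$ vanishes, so the assumption forces $(1+m_2P_2)^{n_2}\equiv 1\pmod{m_1}$, and symmetrically $(1+m_1P_1)^{n_1}\equiv 1\pmod{m_2}$. Because $m_1\neq m_2$, some prime $p$ divides exactly one of these two moduli; say $p\mid m_2$ and $p\nmid m_1$. I reduce the second congruence in $\mathbb{F}_p[t,t^{-1}]$, an integral domain whose units are exactly the monomials $ct^k$ with $c\in\mathbb{F}_p^*$. Hence the reduction $\overline{V(K_1)}$ equals $ct^k$ for some $c,k$; raising to the $n_1$-th power forces $kn_1=0$, so $k=0$, and then $V(K_1)(1)=1$ pins $c=1$. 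Thus $V(K_1)\equiv 1\pmod p$, and since $m_1$ is a unit modulo $p$, we deduce $P_1\equiv 0\pmod p$.

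The main obstacle is to propagate this one-prime triviality all the way up to the full modulus $m_1m_2$ and close off the contradiction. The strategy is to lift ``$V(K)\equiv 1\pmod p$'' to ``$V(K)\equiv 1\pmod{p^e}$'' for every $p^e$ dividing $m_1m_2$ by an inductive binomial argument in the spirit of Lemma~\ref{lem:powerpolynomialmodular}, tracking how $(1+pR)^n$ behaves modulo successive powers of $p$ and how the $p$-adic valuation of the tail polynomial $R$ is forced to grow. Combining these prime-by-prime lifts with the symmetric deduction for $K_2$ should, when $m_1\neq m_2$, force $V(K_1)$ and $V(K_2)$ to satisfy simultaneous triviality constraints that are incompatible with $K_1$ being genuinely only $m_1$-trivial and $K_2$ being genuinely only $m_2$-trivial; the treatment of primes $p\mid\gcd(m_1,m_2)$, where both lifts compete, is the most delicate step and the place where the hypothesis $m_1\neq m_2$ is essential.
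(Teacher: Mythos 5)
Your opening reduction is sound, but the route is quite different from the paper's: the paper does not argue prime by prime at all. It reduces the statement to the purely polynomial Proposition~\ref{pro:nmproduct} and proves that by a single binomial expansion of $P_1^aP_2^b$, isolating the surviving terms $n^aQ_1^a$ and $m^bQ_2^b$. Within your own route there is a concrete false step: the claim that $m_1\neq m_2$ forces some prime to divide exactly one of $m_1,m_2$. Take $m_1=2$ and $m_2=4$ (or any two distinct integers with the same radical): every prime dividing one divides the other, so the pivot prime $p$ on which your entire argument rests need not exist.

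The second, more fundamental gap is that your closing paragraph is a plan rather than an argument, and it cannot be closed from the stated hypotheses. Being $m_1$-trivial only asserts $V(K_1)\equiv 1\pmod{m_1}$; it places no \emph{upper} bound on how trivial $V(K_1)$ may be modulo other integers, so deducing $V(K_1)\equiv 1\pmod{p^e}$ for various prime powers contradicts nothing. Your phrase ``genuinely only $m_1$-trivial'' smuggles in a hypothesis that is not in the statement. Indeed, as literally stated the proposition needs such an extra hypothesis: if some knot $K$ were simultaneously $2$-trivial and $3$-trivial, then taking $K_1=K_2=K$, $m_1=2$, $m_2=3$, $n_1=n_2=1$ would give a $6$-trivial connected sum. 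The paper supplies the missing condition at the polynomial level: Proposition~\ref{pro:nmproduct} assumes the tails $Q_1,Q_2$ each have at least two coprime coefficients, which is precisely what prevents $Q_i$ from vanishing modulo any integer greater than $1$ and lets the leading terms $n^aQ_1^a$ and $m^bQ_2^b$ survive the reduction modulo $nm$. To repair your proof you would need to import that condition (or an equivalent assumption on $V(K_1)$, $V(K_2)$) and then replace the prime-by-prime lifting by an argument that actually exploits it.
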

\begin{proof}
See proposition~\ref{pro:nmproduct}.
\end{proof}

The best composition we can obtain this way is the greatest common divisor of the moduli involved.
\begin{cor}
For $K_1, K_2,\dots, K_n$ knots which are $m_1$-trivial, $\dots$, $m_n$-trivial respectively and $k_1, k_2,\dots, k_n\geq 1$, we have:
\[
V\left(\connectsum_{i=1}^n \#\left(K_i,m_i^{k_i-1}\right)\right)\equiv 1 \left[\gcd\left(m_1^{k_1},\dots,m_n^{k_n}\right)\right]
\]
\end{cor}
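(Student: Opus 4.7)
The plan is to combine the construction used in the proof of Theorem~\ref{thm:nktrivial} with Proposition~\ref{pro:jonesconnectedsum} and then push everything through a simple divisibility argument to get the $\gcd$ on the right-hand side.

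First, I would apply Theorem~\ref{thm:nktrivial} (or rather its constructive proof) to each summand individually. Since $K_i$ is $m_i$-trivial, the same computation as in the proof of Theorem~\ref{thm:nktrivial}, using Lemma~\ref{lem:powerpolynomialmodular} with $P=V(K_i)$ and $n=m_i$, shows that $\#(K_i,m_i^{k_i-1})$ is $m_i^{k_i}$-trivial, i.e.
\[
V\!\left(\#\!\left(K_i,m_i^{k_i-1}\right)\right)\equiv 1\ \left[m_i^{k_i}\right]\qquad\text{for each }i.
\]

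Next, I would use Proposition~\ref{pro:jonesconnectedsum} inductively. Because the Jones polynomial is multiplicative under connected sum, the Jones polynomial of the big connected sum equals the product of the Jones polynomials of the pieces:
\[
V\!\left(\connectsum_{i=1}^n \#\!\left(K_i,m_i^{k_i-1}\right)\right)=\prod_{i=1}^n V\!\left(\#\!\left(K_i,m_i^{k_i-1}\right)\right).
\]

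Finally, I would set $d=\gcd\!\left(m_1^{k_1},\dots,m_n^{k_n}\right)$. Since $d$ divides every $m_i^{k_i}$, each factor in the product above is $\equiv 1\ [d]$, so the whole product reduces to $1$ modulo $d$. There is essentially no obstacle: the only point that requires a hair of care is to check that reducing each congruence $V_i\equiv 1\ [m_i^{k_i}]$ to $V_i\equiv 1\ [d]$ is legitimate, which is immediate from $d\mid m_i^{k_i}$. This yields the desired congruence and completes the proof.
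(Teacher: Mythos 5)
Your proposal is correct and is exactly the argument the paper intends: the corollary is stated without proof as an immediate consequence, and the implicit reasoning is precisely your three steps (apply Lemma~\ref{lem:powerpolynomialmodular} to each $V(K_i)=1+m_iQ_i$ to get $V(\#(K_i,m_i^{k_i-1}))\equiv 1\ [m_i^{k_i}]$, multiply via Proposition~\ref{pro:jonesconnectedsum}, then weaken each congruence to the common divisor $d=\gcd(m_1^{k_1},\dots,m_n^{k_n})$). No gaps.
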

\section{Arithmetic properties}\label{sec:arithmeticproperties}

To reach theorem~\ref{thm:nktrivial}, we used some arithmetic properties, in particular lemma~\ref{lem:powerpolynomialmodular}. The aim of this section is to prove this result.
\begin{nt}
For $p$ a prime number, we denote by $v_p(n)$ the $p$-adic valuation of an integer $n$.
\end{nt}

With this notation, the prime factor decomposition of an integer $n$ is $n=\prod_{i=0}^kp_i^{v_{p_i}(n)}$. We start by showing the following:
\begin{pro}\label{pro:pvaluation}
For $p$ a prime number and $k$ a positive integer, the $p$-valuation of $k!$ is smaller than or equal to $k-1$. 
\end{pro}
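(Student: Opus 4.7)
The plan is to apply Legendre's formula, which the introduction already flags as the main arithmetic tool of the paper. Recall that for a prime $p$ one has
\[
v_p(k!) \;=\; \sum_{i\geq 1} \left\lfloor \frac{k}{p^i} \right\rfloor \;=\; \frac{k - s_p(k)}{p-1},
\]
where $s_p(k)$ denotes the sum of the digits of $k$ written in base $p$. The second equality is the refined form of Legendre's formula and is the version I would cite; it is the one that gives the tight bound we need.

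From here the proof is essentially immediate. Since $k\geq 1$, the base-$p$ expansion of $k$ contains at least one nonzero digit, so $s_p(k)\geq 1$. Plugging this into the formula yields
\[
v_p(k!) \;=\; \frac{k - s_p(k)}{p-1} \;\leq\; \frac{k-1}{p-1} \;\leq\; k-1,
\]
the last inequality using $p\geq 2$. That finishes the proof.

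There is no real obstacle: the only subtlety is that the weaker bound $v_p(k!) \leq k/(p-1)$ obtained by dropping the floors is not sharp enough in the case $p=2$, where it only yields $v_2(k!) \leq k$. Using $s_p(k)\geq 1$ (or equivalently, observing that the bound $\lfloor k/p^i\rfloor \leq k/p^i$ is strict for at least one $i$ when $k\geq 1$) is exactly what gains the extra $-1$. As a sanity check, the inequality is tight for $p=2$ and $k$ a power of two, which matches the fact that $v_2(2^m!) = 2^m - 1$.
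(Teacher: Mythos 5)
Your proof is correct and follows essentially the same route as the paper: both invoke the digit-sum form of Legendre's formula $v_p(k!)=\frac{k-s_p(k)}{p-1}$ and conclude from $s_p(k)\geq 1$ and $p\geq 2$. Your added remarks on why the naive bound $k/(p-1)$ does not suffice for $p=2$, and the sharpness check at $k=2^m$, are accurate but not needed.
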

\begin{proof}
We denote by $s_p(k)$ the sum of the digits of $k$ written in the base-$p$ expansion. The alternate form of Legendre's formula \cite[theorem~1.2.3 p.~6]{boros2004irresistible} (see \cite[XVI p.~8]{legendre1808essai} for the original) immediately gives:
\[
v_p(k!) = \frac{k-s_p(k)}{p-1}
\]
It remains to establish the desired bound. As $k\geq 1$ we have $s_p(k)\geq1$ and as $p>1$ the expected result is obvious.
\end{proof}

The previous proposition allow us to establish the following divisibility result involving binomial coefficients at powers of $n$:
\begin{pro}\label{pro:binome}
For $n,k\geq2$ integers and $i\in\intent{1,k-1}$, we have that $n^{k-i}$ divide $\binom{n^{k-1}}{i}$.
\end{pro}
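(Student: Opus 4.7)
The plan is to verify the divisibility $n^{k-i} \mid \binom{n^{k-1}}{i}$ one prime at a time. Concretely, for every prime $p$ I aim to prove the inequality $v_p\!\bigl(\binom{n^{k-1}}{i}\bigr) \geq (k-i)\, v_p(n)$. When $p \nmid n$ the right-hand side is zero and the bound is automatic, so the whole question reduces to primes $p$ dividing $n$.

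The starting point I would exploit is the absorption identity
\[
i \binom{n^{k-1}}{i} = n^{k-1} \binom{n^{k-1}-1}{i-1},
\]
valid for every $i \geq 1$. Passing to $p$-adic valuations and using the non-negativity of $v_p\!\bigl(\binom{n^{k-1}-1}{i-1}\bigr)$ immediately yields the lower bound
\[
v_p\!\bigl(\binom{n^{k-1}}{i}\bigr) \geq (k-1)\, v_p(n) - v_p(i).
\]

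To upgrade this to the desired $(k-i)\, v_p(n)$, I need $v_p(i) \leq (i-1)\, v_p(n)$. Since $p \mid n$ forces $v_p(n) \geq 1$, it is enough to establish $v_p(i) \leq i - 1$, and this follows at once from proposition~\ref{pro:pvaluation}: writing $i! = i \cdot (i-1)!$ gives $v_p(i) \leq v_p(i!) \leq i - 1$. Combining the two inequalities produces the claim for every prime $p$ dividing $n$, which is what was needed.

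The proof is genuinely short once one commits to working one prime at a time, so there is no serious obstacle; the step that does the actual work is the appeal to proposition~\ref{pro:pvaluation}. The only other thing worth double-checking is the edge case $i = 1$, where the bound degenerates to $v_p(1) = 0 \leq 0$ and the absorption identity collapses to the tautology $\binom{n^{k-1}}{1} = n^{k-1}$, consistent with the desired divisibility $n^{k-1} \mid n^{k-1}$.
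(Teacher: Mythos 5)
Your proof is correct and follows the same overall strategy as the paper's: check the divisibility prime by prime via $p$-adic valuations, and use proposition~\ref{pro:pvaluation} to obtain the key chain $\leq i-1\leq (i-1)v_p(n)$ for primes $p$ dividing $n$. The one real difference lies in how the factor $n^{k-1}$ is extracted from the binomial coefficient: the paper works with $n^{k-1}/\gcd\left(n^{k-1},i!\right)$ and asserts, without detailed justification, that this quantity divides $\binom{n^{k-1}}{i}$, whereas you invoke the absorption identity $i\binom{n^{k-1}}{i}=n^{k-1}\binom{n^{k-1}-1}{i-1}$, which gives $v_p\bigl(\binom{n^{k-1}}{i}\bigr)\geq (k-1)v_p(n)-v_p(i)$ in a single fully justified step. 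Your variant is marginally more economical as well: it only needs the bound $v_p(i)\leq i-1$ rather than $v_p(i!)\leq i-1$, and it sidesteps the divisibility claim that the paper leaves implicit. Both arguments are sound and lead to the same conclusion.
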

\begin{proof}
For $p$ a prime number dividing $n$, we study the $p$-valuation of $\frac{n^{k-1}}{\gcd\left(n^{k-1},i!\right)}$. By proposition~\ref{pro:pvaluation}, we know that $v_p(i!)\leq i-1\leq v_p(n)(i-1)$ as $v_p(n)\geq1$, thus:
\[
v_p\left(\frac{n^{k-1}}{\gcd\left(n^{k-1},i!\right)}\right)=v_p(n)(k-1)-v_p\left(\gcd\left(n^{k-1},i!\right)\right)\geq v_p(n)(k-1)-v_p(i!)\geq v_p(n)(k-i)
\]
We conclude that $n^{k-i}$ divides $\frac{n^{k-1}}{\gcd\left(n^{k-1},i!\right)}$, so it divides $\binom{n^{k-1}}{i}$ too.
\end{proof}

Proposition~\ref{pro:binome} has an interesting consequence on powers of specific polynomials. Let us prove now lemma~\ref{lem:powerpolynomialmodular}:
\begin{proof}[Proof of lemma~\ref{lem:powerpolynomialmodular}]
The case $n\leq 1$ is trivial. Assuming $n\geq 2$, we develop the product:
\[
P^{n^{k-1}} = (1+nQ)^{n^{k-1}} = \sum_{i=0}^{n^{k-1}}\left[\binom{n^{k-1}}{i}(nQ)^{i}\right] = 1 + \sum_{i=1}^{k-1}\left[\binom{n^{k-1}}{i}(nQ)^{i}\right] + n^kR_0
\]
with $R_0$ a remainder polynomial. The only thing left to do is to take enough power of $n$ from the combinatorial coefficient to have a factor $n^k$ appear. However, by proposition~\ref{pro:binome}, we know that $\binom{n^{k-1}}{i}$ is divisible by $n^{k-i}$ for~$i$ in $\intent{1,k-1}$, so:
\[
    1 + \sum_{i=1}^{k-1}\left[\binom{n^{k-1}}{i}(nQ)^{j}\right] + n^kR_0 = 1 + \sum_{i=1}^{k-1}\left[n^kR_i\right] + n^kR_0 \equiv 1 \left[n^k\right]
\]
where $R_i$ are some polynomials.
\end{proof}

If we take for example a polynomial of the form $P=1+3Q$, a direct computation gives $P^3=1+9Q+9Q^2+27Q^3$ which is congruent to $1$ modulo $9$.
\begin{rem}
We can generalize lemma~\ref{lem:powerpolynomialmodular} for any ring $R$. In fact for any $A=I+nB$ living in $R$, since the neutral element $I$ commutes with all elements, the binomial expansion works even if this ring is not commutative.
\end{rem}

The following proposition explains why we can't generalize directly the proof of theorem~\ref{thm:nktrivial}.
\begin{pro}\label{pro:nmproduct}
Let $P_1$ and $P_2$ be two polynomials of the form $P_1=1+nQ_1$, $P_2=1+mQ_2$ where $Q_1$ and $Q_2$ are some polynomials with at least two coprime coefficients and $n$, $m$ are different integers. Then:
\[\forall a,b\geq0, P_1^aP_2^b\not\equiv1 [nm]\]
\end{pro}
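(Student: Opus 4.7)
The plan is to argue by contradiction: assume $P_1^aP_2^b \equiv 1 \pmod{nm}$ for some $a, b \geq 0$ with $(a,b)\neq(0,0)$. The key idea is to localise at a prime $p$ where $n$ and $m$ behave differently, and use that the units of $\mathbb{F}_p[t]$ are exactly the nonzero constants, so that a polynomial whose reduction is nontrivial in $t$ cannot raise to a power congruent to $1$ in $\mathbb{F}_p[t]$. The coprime coefficient hypothesis on $Q_1$ and $Q_2$ is what prevents such reductions from collapsing to constants.

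Concretely, since $n\neq m$, there is a prime $p$ with $v_p(n)\neq v_p(m)$; swapping roles if needed, assume $v_p(n)<v_p(m)$. First I would reduce the congruence modulo $p^{v_p(n)+1}$, which divides both $nm$ and $m$: the factor $P_2=1+mQ_2$ becomes trivial, leaving $(1+nQ_1)^a\equiv 1\pmod{p^{v_p(n)+1}}$. Binomial expansion, together with the observation that $n^i$ has $p$-valuation at least $2v_p(n)\geq v_p(n)+1$ for $i\geq 2$, collapses this to $anQ_1\equiv 0\pmod{p^{v_p(n)+1}}$, i.e.\ $aQ_1\equiv 0\pmod{p}$. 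The hypothesis that $Q_1$ has two coprime coefficients guarantees that at least one coefficient of $Q_1$ is coprime to $p$, so we must have $p\mid a$. Iterating the same analysis at higher powers of $p$ — using Proposition~\ref{pro:binome} to handle the higher binomial terms exactly as in the proof of Lemma~\ref{lem:powerpolynomialmodular} — forces $p^k\mid a$ for every $k\geq 1$, and hence $a=0$. A symmetric argument, using a prime $p'$ with $v_{p'}(m)<v_{p'}(n)$ so that $P_1$ becomes trivial after reduction, then forces $b=0$, contradicting $(a,b)\neq(0,0)$.

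The main obstacle I anticipate is the symmetric step: the hypothesis $n\neq m$ only guarantees one prime separating $n$ and $m$, not necessarily one on each side. When, say, every prime dividing $m$ also divides $n$ with at least as much multiplicity (e.g.\ $n\mid m$ strictly), the mirror argument cannot be applied verbatim. In that case I would instead keep track of the full binomial expansion of $(1+nQ_1)^a(1+mQ_2)^b$ modulo $nm$, isolating cross-terms of the form $\binom{a}{i}\binom{b}{j}n^im^jQ_1^iQ_2^j$ whose $p$-adic valuations sit just below $v_p(nm)$ for the separating prime $p$. The coprime coefficient hypothesis on both $Q_1$ and $Q_2$ is then used to show that at least one such surviving cross-term witnesses $P_1^aP_2^b\not\equiv 1\pmod{nm}$, and this is where the technical core of the proof lies.
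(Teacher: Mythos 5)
Your strategy (localise at a prime $p$ separating $n$ and $m$, kill the factor $P_2$ modulo a suitable power of $p$, and extract divisibility conditions on the exponent $a$) is genuinely different from the paper's, which simply expands $P_1^aP_2^b$ by the binomial formula and singles out the terms $n^aQ_1^a$ and $m^bQ_2^b$. However, your argument has a fatal gap at the step ``iterating the same analysis at higher powers of $p$ forces $p^k\mid a$ for every $k\geq 1$, hence $a=0$''. The hypothesis $P_1^aP_2^b\equiv 1~[nm]$ only carries $p$-adic information up to the \emph{fixed} power $p^{v_p(nm)}$, and $P_2$ is only trivial modulo $p^{v_p(m)}$; so the lifting can be iterated at most finitely many times and yields at best something like $p^{v_p(m)-v_p(n)}\mid a$, never $a=0$. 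Indeed $a=0$ cannot be forced: for $n=2$ and $Q_1=t+t^2$ one checks $(1+2Q_1)^4\equiv 1~[8]$. Two further problems: the collapse to $anQ_1\equiv 0~[p^{v_p(n)+1}]$ needs $v_p(n)\geq 1$, whereas the separating prime may satisfy $v_p(n)=0$ (e.g.\ $n=3$, $m=6$), and there the fallback ``units of $\mathbb{F}_p[t]$'' argument also breaks, because a polynomial with two coprime coefficients can still reduce to a nonzero constant modulo $p$ (e.g.\ $Q_1=2+3t$ with $p=3$). Finally, you explicitly leave the one-sided case (every separating prime separating in the same direction) unproved; that is not a corner case but the technical heart of the claim.

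These obstacles are not artifacts of your method: the proposition as printed is false, so no proof can succeed. Trivially $a=b=0$ gives $P_1^0P_2^0=1$; worse, even for $a,b\geq 1$, take $n=2$, $m=4$, $Q_1=t+t^2$ and any admissible $Q_2$: then $P_1^4\equiv 1~[8]$ as above, and $P_2^2=1+8Q_2+16Q_2^2\equiv 1~[8]$, so $P_1^4P_2^2\equiv 1~[nm]$. The paper's own proof does not survive this example either: it asserts that $n^aQ_1^a$ and $m^bQ_2^b$ are ``clearly'' nonzero modulo $nm$, but here $n^aQ_1^a=16(t+t^2)^4\equiv 0~[8]$, and nothing is said about possible cancellation with the remainders $R_1$ and $R_2$. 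So rather than trying to close the gaps in your argument, the productive move is to identify additional hypotheses (for instance conditions on $\gcd(n,m)$, or restrictions on the exponents $a$ and $b$) under which the intended non-triviality statement actually holds.
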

\begin{proof}
We expand using the binomial formula:
\begin{align*}
    P_1^aP_2^b &=\left(\sum_{i=0}^a\binom{a}{i}(nQ_1)^i\right)\left(\sum_{j=0}^b\binom{b}{j}(mQ_2)^b\right)\\
    &= 1 + \underbrace{\sum_{i=1}^a\binom{a}{i}(nQ_1)^i}_{=n^aQ_1^a+R_1} + \underbrace{\sum_{j=1}^b\binom{b}{j}(nQ_2)^j}_{=m^bQ_2^b+R_2} + \underbrace{\sum_{i=1}^a\sum_{j=1}^b\binom{a}{i}\binom{b}{j}(nQ_1)^i(mQ_2)^j}_{=nmR}\\
    &= 1 + n^aQ_1^a + m^bQ_2^b + R_1 + R_2 + nmR
\end{align*}
Here $R_1$, $R_2$ and $R$ are some polynomials. We clearly see that the terms $n^aQ_1^a$ and $m^bQ_2^b$ are not $0$ modulo $nm$, hence $P_1^aP_2^b$ is not equivalent to $1$ modulo $nm$.
\end{proof}
\section{Further work}\label{sec:furtherwork}

The main consequence of theorem~\ref{thm:nktrivial} is the existence for all $k$ of $2^k$-trivial and $3^k$-trivial non-prime knots. However, we do not have any information on other moduli. It will be interesting to find a $6$-trivial one, being $2$-trivial and $3$-trivial at the same time it may help to determine if the $n$-trivial property is multiplicative.

Although this property is defined on the Jones polynomial, we can imagine a similar definition on the Kauffman bracket. This might yield stronger results, surely linked with the one on the Jones polynomial. This approach was already used in \cite{eliahou2017remarkable} but with algebraic tangles only.

Another problem is determining the minimal number of crossing needed for a knot to be $n$-trivial, and also whether these "minimal" $n$-trivial knots are prime when $n$ is composite.

\bibliographystyle{plain}
\bibliography{Bibliography/references}

\begin{tabular}{cp{0.8\linewidth}}
\faUser & Guillaume Pagel\\
\faAt & \href{mailto:guillaume.pagel@univ-littoral.fr}{guillaume.pagel@univ-littoral.fr}\\
\faEnvelope & Univ. Littoral Côte d'Opale, UR 2597, LMPA, Laboratoire de Mathématiques Pures et Appliquées Joseph Liouville, F-62100 Calais, France\\
\end{tabular}

\end{document}